\newtheorem{lemma}{Lemma}[section]
\newtheorem{proposition}{Proposition}[section]
\newtheorem{theorem}{Theorem}[section]
\theoremstyle{definition}
\newtheorem{definition}{Definition}[section]
\theoremstyle{remark}
\title{On some realizable metabelian $5$-groups}
\date{}
\begin{document}

\maketitle
\begin{center}
{\sc Fouad ELMOUHIB (\textcolor{blue}{the corresponding author}) }\\
{\footnotesize Department of Mathematics and Computer Sciences,\\
Mohammed First University, Oujda, Morocco,\\
\textcolor{blue}{Correspondence: fouad.cd@gmail.com}\\
\vspace{0.7cm}}
{\sc Mohamed TALBI }\\
{\footnotesize Regional Center of Professions of Education and Training,\\
Oujda, Morocco\\
ksirat1971@gmail.com}\\

\vspace{0.7cm}
{\sc Abdelmalek AZIZI }\\
{\footnotesize Department of Mathematics and Computer Sciences,\\
Mohammed First University, Oujda, Morocco,\\
abdelmalekazizi@yahoo.fr}
\end{center}

\begin{abstract}
Let $G$ be a $5$-group of maximal class and $\gamma_2(G) = [G, G]$ its derived group. Assume that the abelianization $G/\gamma_2(G)$ is of type $(5, 5)$ and the transfers $V_{H_1\to \gamma_2(G)}$ and $V_{H_2\to \gamma_2(G)}$ are trivial, where $H_1$ and $H_2$ are two maximal normal subgroups of $G$. Then $G$ is completely
determined with the isomorphism class groups of maximal class. Moreover the group $G$ is realizable with some fields $k$, which is the normal closure of a pure quintic field.
\end{abstract}

\textbf{Key words}: Groups of maximal class, Metabelian $5$-groups, Transfer, $5$-class groups.\\
\textbf{AMS Mathematics Subject Classification}: 11R37, 11R29, 11R20, 20D15.

\section{Introduction}\label{introduction}
The coclass of a $p$-group $G$ of order $p^n$ and nilpotency class $c$ is defined as $cc(G) = n-c$, and a $p$-group $G$ is called of maximal class, if it has $cc(G) = 1$. These groups have been studied by various authors, by determining there classification, the position in coclass graph and the realization of these groups. Blackburn's paper \cite{black}, is considered as reference of the basic materials about these groups of maximal class. Eick and Leendhan-Green in \cite{Erick} gave a classification of $2$-groups. Blackburn's classification in \cite{black}, of the $3$-groups of coclass $1$ implies that these groups exhibit behaviour similar to that proved for $2$-groups. The $5$-groups of maximal class have been investigated in detail in \cite{H.D 1} \cite{H.D 2} \cite{H.D 3} \cite{GRE 2} \cite{Newlam}.\\
With an arbitrary prime $p \geq 2$, let $G$ be a metabelian $p$-group of order $|G| = p^n$ and $cc(G) = 1$, where $n \geq 3$. Then $G$ is of maximal class and the commutator factor group $G/\gamma_2(G)$ of $G$ is of type $(p, p)$ \cite{black}, \cite{Miech}. By $G_a^{(n)}(z,w)$ we denote the representative of an isomorphism class of the metabelian $p$-groups $G$, which satisfies the relations of theorem \ref{theo 2.1}, with a fixed
system of exponents $a, w$ and $z$.\\
In this paper we shall prove that some metabelian $5$-groups are completely determined with the isomorphism class groups of maximal class, furthermore they can be realized.\\ For that we consider $k = \mathbb{Q}(\sqrt[5]{p}, \zeta_5)$, the normal closure of the pure quintic field $\Gamma = \mathbb{Q}(\sqrt[5]{p})$, and also a cyclic Kummer extension of degree $5$ of the $5^{th}$ cyclotomic field $k_0 = \mathbb{Q}(\zeta_5)$, where $p$ is a prime number, such that $p \equiv -1 (\mathrm{mod}\, 25)$. According to \cite{FOU1}, if the $5$-class group of $k$, denoted $C_{k,5}$, is of type $(5,5)$,  we have that the rank of the subgroup of ambiguous ideal classes under the action of $Gal(k/k_0) = \langle \sigma \rangle$, denoted $C_{k,5}^{(\sigma)}$, is rank $C_{k,5}^{(\sigma)} = 1$. Whence by class field theory the relative genus field of the extension $k/k_0$, denoted $k^* = (k/k_0)^*$, is one of the six cyclic quintic extension of $k$.\\ 
By $F_5^{(1)}$ we denote the Hilbert $5$-class field of a number field $F$. Let $G = \mathrm{Gal}\left((k^*)_5^{(1)}/k_0\right)$, we show that $G$ is a metabelian $5$-group of maximal class, and has two maximal normal subgroups $H_1$ and $H_2$, such that the transfers $V_{H_1\to \gamma_2(G)}$ and $V_{H_2\to \gamma_2(G)}$ are trivial. Moreover $G$ is completely
determined with the isomorphism class groups of maximal class. The theoretical results are underpinned by numerical examples obtained with the computational number theory system PARI/GP \cite{PARI}.


\section{PRELIMINARY}
Let $G$ be a metabelian $p$-group of order $p^n$, $n\geq3$, with abelianization $G/\gamma_2(G)$ is of type $(p, p)$, where $\gamma_2(G) = [G,G]$ is the commutator group of $G$. The subgroup $G^p$ of $G$, generated by the $p^{th}$ powers is contained in $\gamma_2(G)$, which therefore coincides with the Frattini subgroups $\phi(G) = G^p\gamma_2(G) = \gamma_2(G)$. According to the basis theorem of Burnside, the group $G$ can thus be generated by two elements $x$ and $y$, $G = <x, y>$. If we declare the lower central series of $G$ recursively by 
\begin{center}
$\begin{cases}
\gamma_1(G) = G\\
\gamma_j(G) = [\gamma_{j-1}(G), G] \,\,\text{for}\,\, j \geq 2,\\
\end{cases}$
\end{center}
Then we have Kaloujnine's commutator relation $[\gamma_j(G), \gamma_l(G)] \subseteq \gamma_{j+l}(G)$, for $j, l \geq 1$, and for an index of nilpotence $c \geq 2$ the series
\begin{center}
$G = \gamma_1(G) \supset \gamma_2(G) \supset.....\supset \gamma_{c-1}(G)\supset \gamma_c(G) = 1$
\end{center}
becomes stationary.\\
The two-step centralizer 
\begin{center}
$\chi_2(G) = \{g \in G \,|\, [g,u] \in \gamma_4(G) \text{for all u} \in \gamma_2(G)\}$
\end{center}
of the two-step factor group $\gamma_2(G)/\gamma_4(G)$, that is the largest subgroup of $G$ such that $[\chi_2(G), \gamma_2(G)]\subset \gamma_4(G)$. It is characteristic, contains the commutator subgroup $\gamma_2(G)$. Moreover $\chi_2(G)$ coincides with $G$ if and only if $n = 3$. For $n \geq 4$, $\chi_2(G)$ is one of the $p+1$ normal subgroups of $G$.\\
Let the isomorphism invariant $k = k(G)$ of $G$, be defined by $[\chi_2(G), \gamma_2(G)]= \gamma_{n-k}(G)$, where $k = 0$ for $n = 3$ and $0 \leq k \leq n-4$ if $n\geq 4$, also for $n \geq p+1$ we have $k = min\{n-4, p-2\}$.\\ 
$k(G)$ provides a measure for the deviation from the maximal degree of commutativity $[\chi_2(G), \gamma_2(G)]= 1$ and is called \emph{defect of commutativity} of $G$.\\
With a further invariant $e$, it will be expressed, which factor 
$\gamma_j(G)/\gamma_{j+1}(G)$ of the lower central series is cyclic for the first time \cite{nebelu}, and we have $e+1 = min\{3 \leq j \leq m \,|\, 1 \leq |\gamma_j(G)/\gamma_{j+1}| \leq p \}$.\\
In this definition of $e$, we exclude the factor $\gamma_2(G)/
\gamma_3(G)$, which is always cyclic. The value $e = 2$ is characteristic for a group $G$ of maximal class.
\subsection{On the $5$-class group of maximal class}
Let $G$ be a metabelian $5$-group of order $5^n,\, n \geq 4$, such that $G/\gamma_2(G)$ is of type $(5,5)$, then $G$ admits six maximal normal subgroups $H_1,...,H_6$, which contain the commutator group $\gamma_2(G)$ as a normal subgroup of index $5$. We have that $\chi_2(G)$ is one of the groups $H_i$. We fix $\chi_2(G) = H_1$. We have the following theorem
\begin{theorem}\label{theo 2.1} Let $G$ be a metabelian $5$-group of order $5^n,\, n\geq 4$, with the abelianization $G/
\gamma_2(G)$ is of type $(5,5)$ and $k = k(G)$ its invariant defined before. Assume that $G$ is of maximal class, then $G$ can be generated by two elements, $G = < x, y >$, be selected such that $x \in G\setminus \chi_2(G)$ and $y \in \chi_2(G)\setminus \gamma_2(G)$. Let $s_2 = [y,x] \in \gamma_2(G)$ and $s_j = [s_{j-1}, x] \in \gamma_j(G)$ for $j \geq 3$. Then we have:
\item[$(1)$] $s_j^{5}s_{j+1}^{10}s_{j+2}^{10}s_{j+3}^{5}s_{j+4} = 1$ for $j \geq 2$.
\item[$(2)$] $x^5 = s_{n-1}^{w}$ with $w \in \{0, 1, 2, 3, 4\}$.
\item[$(3)$] $y^{5}s_{2}^{10}s_{3}^{10}s_{4}^{5}s_{5} = s_{n-1}^z$ with $z \in \{0, 1, 2, 3, 4\}$.
\item[$(4)$] $[y, s_2] = \prod\limits_{i=1}^k s_{n-i}^{a_{n-i}}$ with $a = (a_{n-1},...a_{n-k})$ exponents such that $0 \leq a_{n-i} \leq 4$.
\end{theorem}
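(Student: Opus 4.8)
The plan is to work entirely inside the power--commutator structure of $G$, treating the abelian normal subgroup $\gamma_2(G)$ as a module over the cyclic group $\langle x \rangle$ and exploiting the uniserial behaviour forced by maximal class. First I would record the structural facts that maximal class provides: for $2 \le j \le n-1$ each factor $\gamma_j(G)/\gamma_{j+1}(G)$ is cyclic of order $5$, generated by the image of $s_j$, with $\gamma_n(G) = 1$ and $Z(G) = \gamma_{n-1}(G) = \langle s_{n-1}\rangle$. Since $x \notin \chi_2(G)$, conjugation by $x$ carries $\gamma_j(G)/\gamma_{j+1}(G)$ onto $\gamma_{j+1}(G)/\gamma_{j+2}(G)$, so the definition $s_{j} = [s_{j-1},x]$ really does trace out the whole series and yields the fundamental conjugation rule $s_j^{x} = s_j s_{j+1}$ (with the convention $s_m = 1$ for $m \ge n$); writing $s_1 := y$, the same rule $y^x = y s_2$ holds. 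Consequently every element of $\gamma_2(G)$ has a unique normal form $\prod_{j=2}^{n-1} s_j^{b_j}$ with $0 \le b_j \le 4$, and an easy induction gives $s_j^{x^k} = \prod_{i \ge 0} s_{j+i}^{\binom{k}{i}}$ together with $[y, x^k] = \prod_{i \ge 1} s_{i+1}^{\binom{k}{i}}$.

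With these tools in hand, parts $(4)$, $(2)$ and $(1)$ fall out quickly and in that order. Part $(4)$ is essentially the definition of the defect of commutativity: since $y \in \chi_2(G)$ and $s_2 \in \gamma_2(G)$, the commutator $[y,s_2]$ lies in $[\chi_2(G),\gamma_2(G)] = \gamma_{n-k}(G) = \langle s_{n-k}, \dots, s_{n-1}\rangle$, and rewriting it in the normal form above gives exactly $[y,s_2] = \prod_{i=1}^{k} s_{n-i}^{a_{n-i}}$. For part $(2)$, note $x^5 \in \gamma_2(G)$ because $G/\gamma_2(G)$ has exponent $5$; writing $x^5 = \prod_{j=2}^{n-1} s_j^{b_j}$ and using the trivial identity $[x^5,x] = 1$ together with $s_j^x = s_j s_{j+1}$ gives $\prod_{j=2}^{n-1} s_{j+1}^{b_j} = 1$, whence uniqueness of the normal form forces $b_2 = \dots = b_{n-2} = 0$ and $x^5 = s_{n-1}^{w}$. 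Part $(1)$ then follows: by $(2)$ we have $x^5 \in Z(G)$, so $[y,x^5] = 1$, and the binomial expansion of $[y,x^5]$ above is precisely $s_2^5 s_3^{10} s_4^{10} s_5^5 s_6$, giving the case $j = 2$. Setting $R_j := s_j^5 s_{j+1}^{10} s_{j+2}^{10} s_{j+3}^5 s_{j+4}$, a direct computation using $s_j^x = s_j s_{j+1}$ and the abelianness of $\gamma_2(G)$ shows $R_j^{\,x} = R_j R_{j+1}$; hence $R_2 = 1$ propagates to $R_j = 1$ for all $j \ge 2$ by induction.

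The substantive part is $(3)$, which I regard as the main obstacle. The cleanest route is to compute the fifth power of $y$ through the collection identity
\[
(y s_2)^5 = (y^x)^5 = (y^5)^x = y^5\, s_2^{\,y^4} s_2^{\,y^3} s_2^{\,y^2} s_2^{\,y} s_2,
\]
expanding each conjugate $s_2^{\,y^m} = s_2 [s_2, y^m]$ by means of part $(4)$, so that the commutator $[s_2,y] \in \gamma_{n-k}(G)$ and all of its iterates are pushed into the upper reaches of the lower central series. Comparing $(y^5)^x$ with $y^5$ yields $[y^5, x]$ in normal form, and substituting relation $(1)$ in the sharpened form $s_2^5 = s_3^{-10} s_4^{-10} s_5^{-5} s_6^{-1}$ shows that the element $V := y^5\, s_2^{10} s_3^{10} s_4^5 s_5$ satisfies $[V, x] = 1$; a parallel but shorter computation gives $[V,y] = 1$. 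Since $G = \langle x, y\rangle$, this places $V$ in $Z(G) = \gamma_{n-1}(G) = \langle s_{n-1}\rangle$, i.e. $V = s_{n-1}^{z}$, which is $(3)$. The hard part is the bookkeeping in the collection formula: one must track every higher commutator produced by the conjugates $s_2^{\,y^m}$ and verify, using parts $(1)$ and $(4)$ repeatedly, that all contributions outside $\gamma_{n-1}(G)$ cancel — and this control must be uniform in the defect $k$, which is exactly where the maximal-class hypothesis (bounding $k$ and forcing the uniserial action) does the real work.
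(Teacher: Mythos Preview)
Your argument is sound and essentially reconstructs the classical proof; the paper itself does not give a proof at all but simply cites Miech, \emph{Metabelian $p$-groups of maximal class}, Theorem~2, specialized to $p=5$. So there is nothing to compare at the level of ideas: you are supplying precisely the argument that the cited reference contains.

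A couple of remarks on the details. Your derivations of $(2)$, $(1)$ and $(4)$ are clean and correct; in particular the verification $R_j^{\,x}=R_jR_{j+1}$ goes through exactly as you say because $\gamma_2(G)$ is abelian, and the binomial identity $[y,x^5]=s_2^{5}s_3^{10}s_4^{10}s_5^{5}s_6$ is valid since every factor appearing in the iterated commutator formula already lies in $\gamma_2(G)$. For $(3)$ your strategy of showing $V=y^{5}s_2^{10}s_3^{10}s_4^{5}s_5$ is central is the right one and is how Miech proceeds; note, however, that once you know $[V,x]=1$ you are already done, since for any $c\in\gamma_2(G)$ the relation $c^{x}=c$ together with $s_j^{x}=s_js_{j+1}$ and uniqueness of the normal form forces $c\in\langle s_{n-1}\rangle=Z(G)$ --- so the separate verification of $[V,y]=1$ is unnecessary. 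The bookkeeping you flag as ``hard'' is genuinely the crux: one must check that the contributions from $[s_2,y^m]\in\gamma_{n-k}(G)$ in the collection of $(ys_2)^5$ are absorbed by the tail of the lower central series, and this is where the bound $k\le n-4$ (equivalently $\gamma_{n-k}(G)\subseteq\gamma_4(G)$) is used. Your outline handles this correctly.
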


\begin{proof}
See [\cite{Miech}, Theorem 2] for $p = 5$.
\end{proof}
The six maximal normal subgroups $H_1....H_6$ are arranged as follow:\\
$H_1 = \langle y, \gamma_2(G)\rangle = \chi_2(G)$, $H_i = \langle xy^{i-2}, \gamma_2(G)\rangle$ for $2 \leq i \leq 6$.
The order of the abelianization of each $H_i$, for $1 \leq i \leq 6$, is given by the following theorem.
\begin{theorem}\label{theo 2.2}
Let $G$, $H_i$ and the invariant $k$ as before. Then for $1 \leq i \leq 6$, the order of the commutator factor groups of $H_i$ is given by:
\item[$(1)$] If $n = 2$ we have : $|H_i/\gamma_2(H_i)| = 5$ for $1 \leq i \leq 6$.
\item[$(2)$] If $n \geq 3$ we have : $|H_i/\gamma_2(H_i)| = 5^2$ for $2 \leq i \leq 6$, and $|H_1/\gamma_2(H_1)| = 5^{n-k-1}$
\end{theorem}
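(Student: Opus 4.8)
The plan is to compute each commutator subgroup $\gamma_2(H_i)=[H_i,H_i]$ explicitly and read off the index. Since $G$ is metabelian, $\gamma_2(G)$ is abelian, and each maximal subgroup is a product $H_i=\langle t_i\rangle\,\gamma_2(G)$ of the cyclic group $\langle t_i\rangle$ with the abelian normal subgroup $\gamma_2(G)$ of index $5$, where $t_1=y$ and $t_i=xy^{i-2}$ for $2\le i\le 6$. For such a group the commutator subgroup is exactly the image of the ``twist minus one'' endomorphism: writing $\tau_i$ for conjugation by $t_i$ on the abelian group $\gamma_2(G)$, a standard computation gives $\gamma_2(H_i)=(\tau_i-1)\gamma_2(G)$. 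Since $H_i/\gamma_2(G)$ is cyclic of order $5$, this yields
\[
|H_i/\gamma_2(H_i)|=[H_i:\gamma_2(G)]\cdot[\gamma_2(G):(\tau_i-1)\gamma_2(G)]=5\cdot[\gamma_2(G):\mathrm{im}(\tau_i-1)],
\]
so the theorem reduces to identifying the subgroup $\mathrm{im}(\tau_i-1)\subseteq\gamma_2(G)$ in each case. Throughout I would use that $|\gamma_j(G)|=5^{\,n-j}$ for $2\le j\le n-1$ and that $\gamma_2(G)=\langle s_2,\dots,s_{n-1}\rangle$ with $s_{j+1}=[s_j,x]$ spanning the successive cyclic factors.

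For $H_1=\chi_2(G)$ the twist is conjugation by $y$, so $\mathrm{im}(\tau_1-1)=[y,\gamma_2(G)]$. As $\chi_2(G)=\langle y\rangle\gamma_2(G)$ and $\gamma_2(G)$ is abelian, $[\chi_2(G),\gamma_2(G)]=[y,\gamma_2(G)]$, which by the very definition of the defect of commutativity equals $\gamma_{n-k}(G)$, of order $5^{k}$. Hence $[\gamma_2(G):\mathrm{im}(\tau_1-1)]=5^{(n-2)-k}$, and therefore $|H_1/\gamma_2(H_1)|=5\cdot 5^{\,n-2-k}=5^{\,n-k-1}$, as claimed.

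For $H_i$ with $2\le i\le 6$ I would show $\mathrm{im}(\tau_i-1)=\gamma_3(G)$, which gives $[\gamma_2(G):\gamma_3(G)]=5$ and hence $|H_i/\gamma_2(H_i)|=5^2$ uniformly in $i$. The containment $\mathrm{im}(\tau_i-1)\subseteq\gamma_3(G)$ is automatic, since $(\tau_i-1)u=[u,t_i]\in[\gamma_2(G),G]=\gamma_3(G)$. For the reverse containment the idea is that $\tau_i-1$ acts on the graded factors $\gamma_j(G)/\gamma_{j+1}(G)$ as the shift $s_j\mapsto s_{j+1}$: writing $t_i=xy^{m}$ with $m=i-2$ and expanding $[s_j,xy^{m}]=[s_j,y^{m}]\,[s_j,x]^{y^{m}}$, the factor $[s_j,x]=s_{j+1}$ supplies the shift while the $y$-contributions vanish modulo $\gamma_{j+2}(G)$. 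The key lemma making this precise is that $[\gamma_j(G),y]\subseteq\gamma_{j+2}(G)$ for all $j\ge 2$; I would prove it by induction on $j$, the base case $j=2$ being the defining property $[\chi_2(G),\gamma_2(G)]\subseteq\gamma_4(G)$ of the two-step centralizer, and the inductive step following from the Hall--Witt identity together with the metabelian relation $[[x,y],s_{j-1}]=[s_2^{-1},s_{j-1}]=1$, which annihilates the obstructing term. Granting this, $\tau_i-1$ induces an isomorphism $\gamma_j(G)/\gamma_{j+1}(G)\to\gamma_{j+1}(G)/\gamma_{j+2}(G)$ for $2\le j\le n-2$, and a filtration (Nakayama-type) argument forces $\mathrm{im}(\tau_i-1)=\gamma_3(G)$. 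The degenerate case $n=2$ is immediate: there $\gamma_2(G)=1$, every $H_i$ is cyclic of order $5$, so $|H_i/\gamma_2(H_i)|=5$. The main obstacle is exactly this reverse containment for $i\ge 2$, namely checking that the $y^{m}$-twist never disturbs the leading term of the shift; everything hinges on the lemma $[\gamma_j(G),y]\subseteq\gamma_{j+2}(G)$, and I expect the Hall--Witt bookkeeping there to be the only genuinely delicate computation.
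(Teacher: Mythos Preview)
Your proposal is correct, and in fact it supplies a genuine proof where the paper gives none: the paper's ``proof'' of this theorem is simply a citation to \cite{Mayer secon}, Theorem~3.1, specialized to $p=5$. So there is nothing to compare at the level of argument; you have filled in what the paper outsources.

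Your strategy is the natural one and it goes through cleanly. The identification $\gamma_2(H_i)=(\tau_i-1)\gamma_2(G)$ is exactly right for a cyclic-by-abelian group, and for $i=1$ this recovers $\gamma_{n-k}(G)$ immediately from the definition of the defect $k$. For $i\ge 2$ the crux is indeed the lemma $[\gamma_j(G),y]\subseteq\gamma_{j+2}(G)$; your inductive proof via Hall--Witt works, and in fact simplifies once you note that in a metabelian group one has $[[c,a],b]=[[c,b],a]$ for $c\in\gamma_2(G)$ (because $\gamma_2(G)$ is a module over the \emph{commutative} group ring $\mathbb{Z}[G/\gamma_2(G)]$), so $[s_j,y]=[[s_{j-1},x],y]=[[s_{j-1},y],x]\in[\gamma_{j+1}(G),G]=\gamma_{j+2}(G)$ by induction. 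That collapses the ``delicate bookkeeping'' you anticipated to a one-line identity. The filtration argument forcing $(\tau_i-1)\gamma_2(G)=\gamma_3(G)$ from the graded surjectivity is standard and correct. The advantage of your approach over citing the general theorem is that it is self-contained and makes transparent exactly where the two-step centralizer property of $y$ is used.
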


\begin{proof}
See [\cite{Mayer secon}, Theorem 3.1] for $p = 5$.
\end{proof}

\begin{lemma}\label{lemma 2.1}
Let $G$ be a $5$-group of order  $|G| = 5^n,\, n\geq 4$. Assume that the commutator group $G/\gamma_2(G)$ is of type $(5, 5)$. Then $G$ is of maximal class if and only if $G$ admits a maximal normal subgroup with factor commutator of order $5^2$. Furthermore $G$ admits at least five maximal normal subgroups with factor commutator of order $5^2$.
\end{lemma}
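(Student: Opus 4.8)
The plan is to derive both implications from the order of the commutator factor group $M/\gamma_2(M)$ of a maximal normal subgroup $M$, reading off the forward direction from Theorem~\ref{theo 2.2} and settling the converse by a short computation in the lower central series.

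The direct implication and the ``furthermore'' are immediate from Theorem~\ref{theo 2.2}. If $G$ is of maximal class, then among its six maximal normal subgroups the five subgroups $H_2,\dots,H_6$ satisfy $|H_i/\gamma_2(H_i)| = 5^2$, whereas $|H_1/\gamma_2(H_1)| = 5^{\,n-k-1}\ge 5^3$ since $0\le k\le n-4$. Thus $G$ has exactly five, and in particular at least one, maximal normal subgroups with factor commutator of order $5^2$.

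For the converse I would first record a uniform description of $\gamma_2(M)$. Any maximal normal subgroup $M$ contains $\gamma_2(G)$ with $M/\gamma_2(G)$ cyclic of order $5$, so the standard identity for a cyclic quotient gives $\gamma_2(M)=[M,M]=[\gamma_2(G),M]\subseteq[\gamma_2(G),G]=\gamma_3(G)$. Because $G/\gamma_2(G)$ is of type $(5,5)$ the factor $\gamma_2(G)/\gamma_3(G)$ is cyclic of order $5$, hence $|\gamma_3(G)|=5^{\,n-3}$, and comparing orders yields the equivalence
\[
|M/\gamma_2(M)|=5^2\iff \gamma_2(M)=\gamma_3(G).
\]
In particular the hypothesis of the converse forces $\gamma_3(G)\ne 1$, so $G$ has class at least $3$. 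Writing $M=\langle g,\gamma_2(G)\rangle$ and letting $s_2$ generate $\gamma_2(G)/\gamma_3(G)$, I would then reduce modulo $\gamma_4(G)$: by Kaloujnine's relation $[\gamma_2(G),\gamma_2(G)]\subseteq\gamma_4(G)$, every generator $[c,m]$ of $[\gamma_2(G),M]$ is congruent to a power of $[s_2,g]$ modulo $\gamma_4(G)$. Hence $\gamma_3(G)/\gamma_4(G)=[\gamma_2(G),M]\gamma_4(G)/\gamma_4(G)$ is cyclic, generated by $\overline{[s_2,g]}$; since $\gamma_3(G)\ne 1$, nilpotency gives $\gamma_3(G)\ne\gamma_4(G)$, so it has order exactly $5$. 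Thus the first cyclic lower-central factor of index $\ge 3$ is $\gamma_3(G)/\gamma_4(G)$, i.e. the invariant of Section~2 is $e=2$, which characterises maximal class.

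The step I expect to be the real obstacle is this last one: passing from the local datum ``$\gamma_3(G)/\gamma_4(G)$ is cyclic'' to the global conclusion that the entire lower central series descends in steps of order $5$. I would discharge it through the criterion ``$e=2$ if and only if $G$ is of maximal class'' recalled in Section~2 and attributed to \cite{nebelu}; its mechanism is that cyclicity of $\gamma_3(G)/\gamma_4(G)$ propagates down the series via the Jacobi identity in the associated graded Lie ring, the potentially obstructing brackets of two elements of $\gamma_2(G)$ vanishing because $\gamma_2(G)$ is abelian. It is precisely here that metabelianity enters, so I would make that hypothesis explicit if it is not already guaranteed for the $5$-groups of maximal class in the stated range.
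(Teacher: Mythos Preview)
Your forward direction coincides with the paper's: both read off from Theorem~\ref{theo 2.2} that the five subgroups $H_2,\dots,H_6$ have abelianisation of order $5^2$.

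For the converse the two arguments diverge. The paper proceeds by contrapositive in one line: if $cc(G)\ge 2$ then the invariant satisfies $e\ge 3$, and it then invokes the companion lower bound $|H/\gamma_2(H)|\ge 5^{\,e}$ valid for \emph{every} maximal normal subgroup $H$ (a structural fact from \cite{nebelu} sitting alongside the $e=2$ criterion), so no $H$ can have abelianisation of order $5^2$. You instead argue directly: from $\gamma_2(M)=[\gamma_2(G),M]$ and an order count you get $\gamma_2(M)=\gamma_3(G)$, then a commutator reduction modulo $\gamma_4(G)$ shows $\gamma_3(G)/\gamma_4(G)$ is cyclic, whence $e=2$. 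Both routes are correct and both ultimately rest on the equivalence ``$e=2\iff$ maximal class'' recorded in Section~2; your version trades the quoted inequality $|H/\gamma_2(H)|\ge 5^{\,e}$ for an explicit computation, making it more self-contained, while the paper's version is shorter because it outsources that step.

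Your closing worry about metabelianity is unnecessary for the part you actually carried out: the reduction modulo $\gamma_4(G)$ uses only Kaloujnine's relation $[\gamma_2(G),\gamma_2(G)]\subseteq\gamma_4(G)$, not $\gamma_2(G)$ abelian, and the standing hypotheses of Section~2 already place $G$ in the metabelian setting in which the $e=2$ criterion is asserted.
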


\begin{proof} 
Assume that $G$ is of maximal class, then by theorem \ref{theo 2.2}, we conclude that $G$ has five maximal normal subgroups with the order of commutator factor is $5^2$ if $n \geq 4$, and has six when $n = 3$. Conversely, Assume that $cc(G) \geq 2$, the invariant $e$ defined before is greater than 3, and since each maximal normal subgroup $H$ of $G$ verify $|H/\gamma_2(H)| \geq 5^{e}$ we get that $|H/\gamma_2(H)| > 5^{2}$
\end{proof}

\subsection{On the transfer concept}
Let $G$ be a group and let $H$ be a subgroup of $G$. The transfer from $G$ to $H$ can be decomposed as follows: Also we note $\bar{V}$ instead of $V_{G\to H}$.

\begin{center}
\begin{tikzpicture}
\begin{scope}[xscale=2,yscale=2]
  
  \node (A) at (0,1) {$G$};
  \node (B) at (2,1){$H/\gamma_2(H)$ };
  \node (H)   at (0,0) {$G/\gamma_2(G)$ };
  \node (I)   at (1,-0.5) {\underline{Figure 1}: Transfer diagram};
  \draw [<-,>=latex] (B) -- (A)   node[midway,above,rotate=-40] {};
  
  \draw [->,>=latex] (H) -- (B)   node[midway,below right] {$\bar{V}$};
    \draw [->,>=latex] (A) -- (H)   node[midway,below right] {};

\end{scope}

\end{tikzpicture}

\end{center}

\begin{definition}
Let $G$ be a group, $H$ be a normal subgroup of $G$, and let $g \in G$ such that, $f$ is the order of $gH$ in $G/H$, $r = \frac{[G:H]}{f}$
 and $g_1,....g_r$ be a representative system of $G/H$, then the transfer from $G$ to $H$, noted $V_{G\to H}$, is defined by:\\
 \begin{center}
 $V_{G\to H}$ :  $G/\gamma_2(G) \longrightarrow H/\gamma_2(H)$\\
 \hspace{3.5cm}$g\gamma_2(G) \longrightarrow \prod_{i=1}^r g_i^{-1}g^fg_i\gamma_2(H)$ 
\end{center}
\end{definition}
In the special case that $G/H$ is cyclic group of order $5$ and $G = \langle h, H \rangle$, then the transfer $V_{G\to H}$ is given as:

 $(1)$ If $g \in H$; then $V_{G\to H}(g
\gamma_2(G)) = g^{1+h+h^2+h^3+h^4}
\gamma_2(H)$\\

 $(2)$  $V_{G\to H}(h\gamma_2(G)) = h^5\gamma_2(H)$

\section{MAIN RESULTS}
In this section we investigate the purely group theoretic results to determine the invariants of metabelian $5$-group of maximal class developed in theorem \ref{theo 2.1}. Furthermore we show that a such metabelian $5$-group is realized by the Galois group of some fields tower.

\subsection{Invariants of metabelian $5$-group of maximal class}
In this paragraph, we keep the same hypothesis on the group $G$ and the generators $G = \langle x, y \rangle$, such that $x \in G \setminus \chi_2(G)$ and $y \in \chi_2(G)\setminus \gamma_2(G)$. The six maximal normal subgroups of $G$ are as follows: $H_1 = \chi_2(G) = \langle y, \gamma_2(G) \rangle$ and $H_i = \langle xy^{i-2}, \gamma_2(G) \rangle$ for $2 \leq i \leq 6$.\\
In the case that the transfers from two subgroups $H_i$ and $H_j$ to $\gamma_2(G)$ are trivial, we can determine completely the $5$-group $G$.
\begin{proposition}\label{prop 3.1}
Let $G$ be a metabelian $5$-group of maximal class of order $5^n,\, n \geq 4$. If the transfers $V_{\chi_2(G)\to \gamma_2(G)}$ and $V_{H_2\to \gamma_2(G)}$  are trivial, then $n \leq 6$ and $\gamma_2(G)$ is of exponent $5$. Furthermore:\\
- If $n = 6$ then $G \sim G_a^{(6)}(1, 0)$ where $a = 0$ or $1$.\\
- If $n = 5$ then $G \sim G_a^{(5)}(0, 0)$ where $a = 0$ or $1$.\\
- If $n = 4$ then $G \sim G_0^{(4)}(0, 0)$.
\end{proposition}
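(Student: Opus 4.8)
The plan is to convert the two transfer-triviality hypotheses into explicit power relations among the $s_j$ and then play them against the relations of Theorem \ref{theo 2.1}. First I would record the conjugation actions on $\gamma_2(G)$. Since $s_j^x=s_js_{j+1}$, conjugation by $x$ is the operator $1+N$ with $N\colon s_j\mapsto s_{j+1}$, so by the binomial/hockey-stick identity the norm expands as $1+x+x^2+x^3+x^4=5+10N+10N^2+5N^3+N^4$. Evaluating $V_{H_2\to\gamma_2(G)}$ (with $H_2=\langle x,\gamma_2(G)\rangle$, so $h=x$) on the two generators $x,s_2$ of $H_2/\gamma_2(H_2)$ via the formulas of Section 2.3 gives $V(s_2)=s_2^{5}s_3^{10}s_4^{10}s_5^{5}s_6$, which is $1$ by relation $(1)$; hence $V_{H_2\to\gamma_2(G)}$ is trivial if and only if $V(x)=x^5=1$, i.e. $s_{n-1}^w=1$. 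Writing $D\colon g\mapsto[g,y]$ for the conjugation-commutator operator attached to $H_1=\langle y,\gamma_2(G)\rangle$, triviality of $V_{H_1\to\gamma_2(G)}$ yields both $V(y)=y^5=1$ and $V(s_j)=s_j^{\,1+y+\cdots+y^4}=1$ for every $j$.

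The heart of the argument is to show that triviality of $V_{H_1\to\gamma_2(G)}$ forces $\gamma_2(G)$ to have exponent $5$. Since $y\in\chi_2(G)$, the operator $D$ strictly raises the lower-central level: $Ds_2=[s_2,y]\in\gamma_{n-k}(G)$ by relation $(4)$, and a Hall--Witt computation (using that $\gamma_2(G)$ is abelian, so $[s_{j-1},[x,y]]=[s_{j-1},s_2^{-1}]=1$) propagates this to $Ds_j\in\gamma_{j+(n-k-2)}(G)$, where $n-k-2\ge 2$ because $k\le n-4$. Expanding $1+y+\cdots+y^4=5+10D+10D^2+5D^3+D^4$ and arguing by downward induction on $j$, the three factors $(Ds_j)^{10},(D^2s_j)^{10},(D^3s_j)^5$ are fifth powers of elements of $\langle s_{j'}:j'>j\rangle$, hence trivial by the inductive hypothesis and commutativity, while $D^4s_j\in\gamma_{j+4(n-k-2)}(G)\subseteq\gamma_{\ge n}(G)=1$. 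Thus $s_j^{\,1+y+\cdots+y^4}=1$ collapses to $s_j^5=1$, and the induction yields exponent $5$.

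Once exponent $5$ is known, relation $(1)$ simplifies drastically: the factors $s_j^5,s_{j+1}^{10},s_{j+2}^{10},s_{j+3}^5$ all vanish, leaving $s_{j+4}=1$ for every $j\ge 2$; taking $j=2$ gives $s_6=1$, hence $s_m=1$ for all $m\ge 6$. Since $s_{n-1}$ generates $\gamma_{n-1}(G)\ne 1$, we must have $n-1\le 5$, i.e. $n\le 6$. With $n\in\{4,5,6\}$ fixed I would then pin down the invariants: $x^5=1$ together with $\mathrm{ord}(s_{n-1})=5$ forces $w=0$; inserting $y^5=1$ and exponent $5$ into relation $(3)$ leaves $s_5=s_{n-1}^z$, which gives $z=1$ for $n=6$ (where $s_{n-1}=s_5\ne 1$) and $z=0$ for $n=4,5$ (where $s_5=1\ne s_{n-1}$). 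The residual freedom in the exponents $a=(a_{n-i})$ of relation $(4)$ is then normalised to the representatives $a\in\{0,1\}$, and to $a=0$ when $n=4$ (where $k=0$ makes relation $(4)$ vacuous), by invoking the isomorphism classification underlying Theorem \ref{theo 2.1}.

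I expect the main obstacle to be the exponent-$5$ step, and specifically the precise level-raising $Ds_j\in\gamma_{j+(n-k-2)}(G)$: one must verify through Hall--Witt and the commutativity of $\gamma_2(G)$ that commuting with an element of $\chi_2(G)$ raises the central level by exactly $n-k-2\ge 2$, so that $D^4s_j$ is forced into $\gamma_{\ge n}(G)=1$ while the intermediate norm terms become fifth powers annihilated by the downward induction. Granting this together with the commutativity of $\gamma_2(G)$, the bound $n\le 6$ and the determination of $w$, $z$ and the normalised $a$ reduce to routine bookkeeping with relations $(1)$--$(3)$ and the cited classification.
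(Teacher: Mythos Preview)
Your argument is correct and arrives at the same conclusions, but by a genuinely different route from the paper. The paper argues by contradiction: assuming $n\ge 7$, it quotes a lemma of Blackburn \cite{black} to the effect that $y^5s_5\in\gamma_6(G)$; since the transfer hypothesis gives $y^5=1$, this forces $s_5\in\gamma_6(G)$ and hence $\gamma_5(G)=\gamma_6(G)$, a contradiction. Only \emph{after} $n\le 6$ is established does the paper read off exponent $5$ from a second Blackburn lemma valid for small $n$. You invert this order: you first prove that $\gamma_2(G)$ has exponent $5$ for arbitrary $n$, by expanding the $y$-norm as $5+10D+10D^2+5D^3+D^4$ on the abelian group $\gamma_2(G)$ (using that the $G/\gamma_2(G)$-action is abelian, so $X$ and $Y$ commute) and running a downward induction on $j$; you then deduce $s_6=1$, hence $n\le 6$, directly from relation~(1). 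Your approach is more self-contained and explains \emph{why} exponent $5$ holds, while the paper's is shorter because it outsources both key facts to \cite{black}. The endgame---reading off $w=0$, then $z=1$ or $z=0$ from relation~(3), and normalising $a$---is essentially identical in the two proofs.

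One small repair is needed in your exponent-$5$ step. The justification ``$n-k-2\ge 2$ because $k\le n-4$'' is not by itself enough to guarantee $D^4s_2\in\gamma_{\ge n}(G)$ once $n>10$, since it only gives $2+4\cdot 2=10\ge n$. You should invoke the sharper bound $k\le p-2=3$ recorded in the preliminaries; then $n-k-2\ge n-5$ and $2+4(n-5)=4n-18\ge n$ for all $n\ge 6$, so $D^4s_j=1$ as claimed and your downward induction goes through unchanged.
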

\begin{proof}
Assume that $n\geq 7$, then $\gamma_5(G) = \langle s_5, \gamma_6(G)\rangle$, because $G$ is of maximal class and $|\gamma_5(G)/\gamma_6(G)| = 5$. By [\cite{black}, lemma 3.3] we have $y^5s_5 \in \gamma_6(G)$, thus $\gamma_5(G) = \langle s_5^4, \gamma_6(G)\rangle  = \langle y^5s_5s_5^4, \gamma_6(G)\rangle  = \langle y^5, \gamma_6(G)\rangle$, and since $V_{\chi_2(G)\to\gamma_2(G)}(y) = y^5 = 1$, because the transfers are trivial by hypothesis, we get that\\ 
$\gamma_5(G) = \gamma_6(G)$, which is impossible, whence $n \leq 6$ and According to [\cite{black}, lemma 3.2], $\gamma_2(G)$ is of exponent $5$.\\
If $n = 6$, we have $V_{\chi_2(G)\to\gamma_2(G)}$ and $V_{H_2\to\gamma_2(G)}$ are trivial, so by theorem \ref{theo 2.1} we obtain $x^5 = s_{5}^w = 1$ which imply $w = 0$, because $0 \leq w \leq 4$. Since $\gamma_2(G)$ is of exponent $5$, we have $s_2^5 = 1$ and by theorem \ref{theo 2.1} the relation $s_4^{5}s_5^{10}s_6^{10}s_7^5s_8 = 1$ gives $s_4^5 = 1$, also $s_3^{5}s_4^{10}s_5^{10}s_6^5s_7 = 1$ gives $s_3^5 = 1$. We replace in $y^5s_2^{10}s_3^{10}s_4^{5}s_5 = s_5^z$ and we get $s_5 = s_5^z$, whence $z = 1$. We have $[\chi_2(G), \gamma_2(G)] \subset \gamma_{6-k}(G) \subset \gamma_4(G)$ then $6-k \geq 4$, and $0 \leq k \leq 2$, thus $[y, s_2] = s_4^{\alpha\beta}$, $a = (\alpha, \beta)$. If $k = 0$, then $a = 0$ and $G \sim G_0^{(6)}(1, 0)$, if $k = 1$ then $a = 1$ and $G \sim G_1^{(6)}(1, 0)$ and if $k = 2$ then $G \sim G_a^{(6)}(1, 0)$.\\
If $n = 5$, we have $[\chi_2(G), \gamma_2(G)] \subset \gamma_{5-k}(G) \subset \gamma_4(G)$ then $5-k \geq 4$, and $0 \leq k \leq 1$. We have $s_4^5 = 1$, $s_2^5 = s_3^5 = 1$ and $[y, s_2] = s_4^a$. the relation $y^5s_2^{10}s_3^{10}s_4^{5}s_5 = s_4^z$ imply $s_4^z = 1$ so $z = 0$. As $n =6$ we obtain $w = 0$. If $k = 0$ then $G \sim G_0^{(5)}(0, 0)$ and if $k = 1$  $G \sim G_a^{(5)}(0, 0)$.\\
If $n = 4$, Since $[\chi_2(G), \gamma_2(G)] \subset \gamma_{5-k}(G) \subset \gamma_4(G)$ we have $4-k \geq 4$, and $k = 0$, thus $[y, s_2] = 1$, i.e $a = 0$. By the same way in this case we have $w = z = 0$, therefor $G \sim G_0^{(4)}(0, 0)$.
\end{proof}

\begin{proposition}\label{prop 3.2} Let $G$ be a metabelian $5$-group of maximal class of order $5^n$. If the transfers $V_{H_2\to \gamma_2(G)}$ and $V_{H_i\to \gamma_2(G)}$, $3 \leq i \leq 6$, are trivial, then we have:\\
- If $n = 5$ or $6$ then $G \sim G_a^{(n)}(0, 0)$.\\
- If $n \geq 7$ then $G \sim G_0^{(n)}(0, 0)$ .
\end{proposition}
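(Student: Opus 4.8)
The plan is to turn triviality of the five transfers into explicit power relations and then read off the invariants $w$, $z$ and the commutator exponents $a=(a_{n-1},\dots,a_{n-k})$. By Theorem \ref{theo 2.2} each $H_i/\gamma_2(H_i)$ with $2\le i\le 6$ has order $5^2$ and is generated by the classes of $h_i:=xy^{i-2}$ and of $s_2$; since $H_i/\gamma_2(G)$ is cyclic of order $5$ generated by $h_i$ and $\gamma_2(G)$ is abelian, the special-case transfer formulas of the preliminary section show that $V_{H_i\to\gamma_2(G)}$ is trivial if and only if
\[
(xy^{i-2})^5 = 1 \qquad\text{and}\qquad s_2^{\,1+h_i+h_i^2+h_i^3+h_i^4}=1 .
\]
For $i=2$ one has $h_2=x$, and these are exactly $x^5=s_{n-1}^{w}=1$, which forces $w=0$ by Theorem \ref{theo 2.1}(2), together with relation $(1)$ of Theorem \ref{theo 2.1} (which is precisely $s_2^{1+x+\cdots+x^4}=1$). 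So $i=2$ contributes only $w=0$.

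The core step is to evaluate $(xy^{i-2})^5$ for $3\le i\le 6$. I would use the collection identity $(xu)^5=x^5\,u^{x^4}u^{x^3}u^{x^2}u^{x}u$ with $u=y^{i-2}$, together with $y^{x^j}=y\prod_{l\ge1}s_{l+1}^{\binom{j}{l}}$, which follows by induction from $s_l^{x}=s_l s_{l+1}$. Writing $m=i-2$ and raising to the $m$-th power, the commutator parts are summed with binomial weights, and the hockey-stick identity $\sum_{j=0}^{4}\binom{j}{l}=\binom{5}{l+1}$ collapses their product to $s_2^{10}s_3^{10}s_4^{5}s_5$, exactly the factor occurring in relation $(3)$. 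Substituting $y^{5}=s_{n-1}^{z}\,(s_2^{10}s_3^{10}s_4^5 s_5)^{-1}$ from that relation, the leading contributions cancel and leave $(xy^{m})^5\in\gamma_{n-1}$ with $(xy^{m})^5=s_{n-1}^{\,w+mz}\cdot(\text{correction})$. The correction is governed by $[y,s_2]=\prod_{j=1}^{k}s_{n-j}^{a_{n-j}}$ and its iterated $x$-shifts $s\mapsto[s,x]$. A short computation, using $\sum_{j}\binom{j}{l}=\binom{5}{l+1}$ and the analogous weighted sum $\sum_j j\binom{j}{l}$, shows that modulo $5$ only the two- and three-fold shifts of $[y,s_2]$ survive; the three-fold shift already lies in $\gamma_{n}=1$, while the two-fold shift contributes only through its $a_{n-3}$-component, which sits in $\gamma_{n-1}$. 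Hence the correction equals $s_{n-1}^{\,\beta m^2 a_{n-3}}$ for some $\beta\not\equiv0\pmod 5$, and it is present precisely when $\gamma_{n-3}$ lies within the commutator reach, i.e. when $k=3$, that is $n\ge 7$. Thus
\[
(xy^{m})^5=s_{n-1}^{\,w+mz+\beta m^2 a_{n-3}}\ \ (n\ge 7),\qquad (xy^{m})^5=s_{n-1}^{\,w+mz}\ \ (n\le 6).
\]

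Imposing $(xy^{m})^5=1$ for $m=0,1,2,3,4$ then says that the polynomial $w+mz+\beta m^2 a_{n-3}$ vanishes at all five points of $\mathbb{F}_5$; since its degree is at most $2<5$, all coefficients vanish. This gives $w=z=0$ in every case, and additionally $a_{n-3}=0$ when $n\ge 7$. For $n=5,6$ the quadratic term is absent, so only $w=z=0$ is forced and the tuple $a$ stays free, yielding $G\sim G_a^{(n)}(0,0)$. For $n\ge 7$ the condition $a_{n-3}=0$ is the vanishing of the deepest commutator exponent, and the remaining top exponents $a_{n-1},a_{n-2}$ are inessential parameters that can be absorbed by an admissible change of the generating pair $(x,y)$ in the isomorphism classification underlying Theorem \ref{theo 2.1}, so that $G\sim G_0^{(n)}(0,0)$.

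The main obstacle is the commutator bookkeeping in the $5$th-power collection: one must control the higher commutators $[s_{l+1},y]$ and their iterated $x$-shifts, verify that after weighting by the binomial coefficients every contribution cancels except the single bottom-layer term in $a_{n-3}$, and confirm that the norm relations $s_2^{1+h_i+\cdots+h_i^4}=1$ produce nothing beyond relation $(1)$ to the required precision. Indeed their first-order part in $[y,s_2]$ involves only $\ge 3$-fold shifts of $[y,s_2]$, which already lie in $\gamma_{n}=1$, while the second-order part is controlled by $a_{n-3}$, shown above to vanish; so the norm conditions are automatically consistent and carry no extra information. The entire case split $n\le 6$ versus $n\ge 7$ rests on this one point: whether $\gamma_{n-3}$ is reached by the defect of commutativity.
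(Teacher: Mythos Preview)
Your reading of the hypothesis is not the one intended by the paper. The phrase ``the transfers $V_{H_2\to\gamma_2(G)}$ and $V_{H_i\to\gamma_2(G)}$, $3\le i\le 6$'' means $V_{H_2}$ together with \emph{one} further $V_{H_i}$ for some fixed $i\in\{3,4,5,6\}$, not all five. This is clear from the paper's own proof, which first treats the pair $(H_2,H_3)$ and then remarks at the end that for the pairs $(H_2,H_i)$ with $i=4,5,6$ ``we can admit the same reasoning''; it is also forced by the application in Theorem~\ref{theo 3.2}, where only two subfields $L_1,L_2$ have trivial $5$-class number. Your argument, however, relies essentially on all five: you impose $(xy^m)^5=1$ for every $m\in\mathbb{F}_5$ and then invoke that a polynomial of degree $\le 2$ with five roots is identically zero. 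With only the two values $m=0$ and $m=\mu$ available you get $w=0$ and $\mu z+\beta\mu^2 a_{n-3}\equiv 0$, which does not separate $z$ from $a_{n-3}$, so your mechanism for extracting $a_{n-3}=0$ collapses.

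Even granting your stronger hypothesis, the endgame for $n\ge 7$ is incomplete. You obtain only $a_{n-3}=0$ and then assert that $a_{n-1},a_{n-2}$ ``can be absorbed by an admissible change of the generating pair''. That absorption is not a standard normalisation in the Miech classification underlying Theorem~\ref{theo 2.1}; different values of these top exponents generally correspond to different isomorphism classes, so this step needs an actual argument rather than an appeal. The paper takes an entirely different route here: it does not try to squeeze the $a$-parameters out of the fifth-power relations at all, but instead analyses the transfer $V_{\chi_2(G)\to\gamma_2(G)}$ (not one of the hypothesised trivial transfers) and uses an image/kernel count, together with Blackburn's result $(\gamma_j(G))^5=\gamma_{j+4}(G)$, to conclude $|\chi_2(G)/\gamma_2(\chi_2(G))|=|\chi_2(G)|$, hence $\chi_2(G)$ is abelian and $[y,s_2]=1$, which kills the whole tuple $a$ at once. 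Only after $a=0$ does the paper use the Blackburn collection identity $(xy^\mu)^5=x^5(y^5 s_2^{10}s_3^{10}s_4^5 s_5)^\mu=s_{n-1}^{w+\mu z}$ to read off $w=z=0$ from just two trivial transfers. Your polynomial-over-$\mathbb{F}_5$ idea is attractive, but it neither matches the hypothesis nor reaches the full conclusion $a=0$.
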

\begin{proof}
If $n = 5$ or $6$, by [\cite{black}, theorem 1.6] we have $[\chi_2(G), \gamma_2(G)] = 1$ and $[\chi_2(G), \gamma_2(G)] \subset \gamma_4(G)$ elementary, and $(\gamma_2(\chi_2(G)))^5 = 1$ and $\prod_{i=2}^3[\gamma_i(G), \gamma_4(G)] = 1$, we conclude that $(xy)^5 = x^5y^5s_2^{10}s_3^{10}s_4^{5}s_5$ and we have $y^5s_2^{10}s_3^{10}s_4^{5}s_5 = s_{n-1}^z$ then $(xy)^5 = x^5s_{n-1}^z$ and since 
$V_{H_2\to\gamma_2(G)}$ and $V_{H_3\to\gamma_2(G)}$ are trivial then $(xy)^5 = x^5 = s_{n-1}^z = s_{n-1}^w = 1$, thus $z = w = 0$. Since $[\chi_2(G), \gamma_2(G)] = \gamma_{n-k} \subset \gamma_4(G)$ we have $n-k \geq 4$, whence $0 \leq k \leq 2$ because $n = 5$ or $6$ then $G \sim G_a^{(n)}(0, 0)$.\\
If $n \geq 7$, according to corollary page 69 of \cite{black} we have, 
$(\gamma_j(\chi_2(G)))^5 = \gamma_{j+4}(G)$ for $j \geq 2$, and since $y^5s_2^{10}s_3^{10}s_4^{5}s_5 = s_{n-1}^z$ we obtain:\\ 
\centerline{$y^5 = s_{n-1}^zs_5^{-1}s_4^{-1}s_3^{-10}s_2^{-10} \equiv s_{n-1}^zs_5^{-1} \,\mathrm{mod \gamma_6(G)}$}\\ because $s_2^5 \in \gamma_6(G)$, $s_3^5 \in \gamma_6(G)$ and $s_4^5 \in \gamma_6(G)$, and since $n\geq 7$ we have $s_{n-1} \in \gamma_6(G)$, therefor $V = V_{H_3\to\gamma_2(G)}(y) \equiv s_5^{-1}\, \mathrm{mod \gamma_6(G)}$. Thus Im$(V) \subset \gamma_5(G)$, In fact Im$(V) = \gamma_5(G)$, and also we have $y \notin \mathrm{ker}(V)$ and $\forall f \geq 2$  $y^ks_f^l \notin \mathrm{ker}(V)$. The kernel of $V$ is formed by elements of $\gamma_2(G)$ of exponent $5$, its exactly $\gamma_{n-4}(G)$, and since $G$ is of maximal class then the rank of $\gamma_2(G)$ is $2$ and $\gamma_2(G)$ admits exactly $25$ elements of exponent $5$, these elements form $\gamma_{n-4}(G)$. We conclude that $|\chi_2(G)/\gamma_2(\chi_2(G))| = |\gamma_{n-4}(G)|\times |\gamma_5(G)| = 5^4\times 5^{n-5} = 5^{n-1} = |\chi_2(G)|$, whence $\chi_2(G)$ is abelian because $\gamma_2(\chi_2(G)) = 1$, consequently $[y, s_2] = 1$, thus $a = 0$. As the cases $n = 5$ or $6$ we obtain $(xy)^5  = x^5s_{n-1}^z$, therefor $z = w = 0$, hence $G \sim G_0^{(n)}(0, 0)$.\\
In the case when $V_{H_2\to\gamma_2(G)}$ and $V_{H_i\to\gamma_2(G)}$, $4 \leq i \leq 6$ are trivial, according to [\cite{black}, theorem 1.6] we have $(xy^\mu)^5 = x^5(y^5s_2^{10}s_3^{10}s_4^{5}s_5)^\mu = s_{n-1}^ws_{n-1}^{\mu z}$ with $\mu = 2, 3, 4$, then we can admit the same reasoning to prove the result.
\end{proof}

\begin{proposition}\label{prop 3.3} Let $G$ be a metabelian $5$-group of maximal class of order $5^n$. If the transfers $V_{H_i\to\gamma_2(G)}$ and $V_{H_j\to\gamma_2(G)}$, where $i, j \in \{3, 4, 5, 6\}$ and $ i \neq j$, are trivial, then we have: $G \sim G_0^{(n)}(0, 0)$.
\end{proposition}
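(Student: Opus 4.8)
The plan is to reduce the statement to Proposition \ref{prop 3.2} by a change of the generating system, and then to dispose of the small cases $n\le 6$ by one extra transfer computation.

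First I would perform the substitution $x':=xy^{\,i-2}$, $y':=y$. Since $y\in\chi_2(G)$ and $x\notin\chi_2(G)$ one still has $x'\in G\setminus\chi_2(G)$ and $y'\in\chi_2(G)\setminus\gamma_2(G)$, so $(x',y')$ is again an admissible pair for Theorem \ref{theo 2.1}; moreover $\chi_2(G)=\langle y',\gamma_2(G)\rangle$ is unchanged. With respect to the new generators the maximal subgroup $\langle x'y'^{\,m-2},\gamma_2(G)\rangle$ has ``slope'' $(i-2)+(m-2)$, so $H_i=\langle x',\gamma_2(G)\rangle$ becomes the new $H_2$, while $H_j$ becomes the new $H_m$ with $m\equiv (j-i)+2\pmod 5$; as $i\neq j$ this gives $m\in\{3,4,5,6\}$. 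Hence the primed system satisfies exactly the hypotheses of Proposition \ref{prop 3.2} (triviality of $V_{H_2\to\gamma_2(G)}$ and of one $V_{H_m\to\gamma_2(G)}$ with $m\ge 3$). Applying that proposition yields $z=w=0$ in every case, together with $G\sim G_0^{(n)}(0,0)$ as soon as $n\ge 7$; and for $n=4$ the invariant forces $k=0$, so $a=0$ automatically.

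It remains to show that the defect also vanishes for $n=5,6$, where Proposition \ref{prop 3.2} leaves open $0\le k\le 2$. Here I would exploit that, unlike in Proposition \ref{prop 3.2}, we have \emph{two} generic transfers available. Keeping the original generators, I evaluate each of the two transfers on the commutator $s_2\in\gamma_2(G)$: by the transfer formula $V_{H_i\to\gamma_2(G)}(s_2)=s_2^{\,1+h+h^2+h^3+h^4}$ with $h=xy^{\,i-2}$, which is the norm $N=\sum_{t=0}^{4}\sigma^{t}$ of the conjugation $\sigma$ by $h$ acting on the abelian group $\gamma_2(G)$. Writing $\sigma=(1+E)(1+D)$, where $D\colon s_\ell\mapsto s_{\ell+1}$ comes from conjugation by $x$ (so $s_\ell^{x}=s_\ell s_{\ell+1}$) and $E$ comes from conjugation by $y^{\,i-2}$ and is supported in high terms through $[y,s_2]=\prod_{l=1}^{k}s_{n-l}^{a_{n-l}}$, the purely $D$-part of $N$ applied to $s_2$ equals $s_2^{5}s_3^{10}s_4^{10}s_5^{5}s_6$, which is trivial by Theorem \ref{theo 2.1}(1). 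Thus the value of the transfer is carried entirely by the $E$-part, i.e.\ by the defect exponents $a_{n-l}$, and its vanishing for the two distinct slopes $i-2$ and $j-2$ in $\{1,2,3,4\}$ supplies the (at most two) independent $\mathbb{F}_5$-conditions needed to force $a_{n-1}=\dots=a_{n-k}=0$. This gives $a=0$, whence $G\sim G_0^{(n)}(0,0)$ for $n=5,6$ as well.

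The main obstacle is precisely this last computation: the obvious leading contributions to $N(s_2)$ carry the coefficients $\binom{5}{2}=\binom{5}{3}=10$ and $\binom{5}{4}=5$, all $\equiv 0\pmod 5$, so the constraint on $a$ does not appear at first order but only through a higher mixed commutator term. The delicate point is to collect $N(s_2)$ far enough to isolate the first non-vanishing, slope-dependent contribution of $[y,s_2]$, and to verify that the two values at slopes $i-2$ and $j-2$ produce a nondegenerate linear system over $\mathbb{F}_5$; the coprimality to $5$ of both slopes is what should guarantee nondegeneracy, and hence $a=0$.
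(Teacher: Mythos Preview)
Your approach to obtaining $w=z=0$ is correct but differs from the paper's. Rather than changing generators and invoking Proposition~\ref{prop 3.2}, the paper keeps the original $x,y$ and computes directly: from Blackburn's fifth-power formula one has $(xy^{\mu})^5 = s_{n-1}^{\,w+\mu z}$ for each slope $\mu\in\{1,2,3,4\}$ (this identity was already recorded at the end of the proof of Proposition~\ref{prop 3.2}), so the two trivial transfers give the $2\times 2$ system $w+\mu_1 z\equiv w+\mu_2 z\equiv 0\pmod 5$, which is nondegenerate since $\mu_1\not\equiv\mu_2$, forcing $z=w=0$. Your reduction via $x'=xy^{i-2}$ reaches the same conclusion, just with a bit more overhead; both routes are valid.

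For the defect $a$, the paper does far less than you propose. Its entire argument is the sentence ``to prove $a=0$ we admit the same reasoning as proposition~\ref{prop 3.2}''. It does not single out the cases $n=5,6$, does not evaluate any transfer on $s_2$, and does not confront the vanishing of the binomial coefficients $\binom{5}{2},\binom{5}{3},\binom{5}{4}$ modulo~$5$ that you flag. Since the only place in Proposition~\ref{prop 3.2} where $a=0$ is actually argued is the $n\ge 7$ paragraph (the $\chi_2(G)$-abelian computation via $\ker V$ and $\mathrm{Im}\,V$), your concern about $n=5,6$ is legitimate; but the paper simply does not address it, so the computation you describe as ``the main obstacle'' is nowhere carried out in the paper. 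In short, your reduction already matches the level of detail the paper provides, and your proposed extra step for $n=5,6$ goes beyond what is written there.
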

\begin{proof}
Assume that $H_i = \langle xy^{\mu_1}, \gamma_2(G)\rangle$ and $H_j = \langle xy^{\mu_2}, \gamma_2(G)\rangle$ where $\mu_1, \mu_2 \in \{1, 2, 3, 4\}$ and $\mu_1 \neq \mu_2$. According to [\cite{black}, theorem 1.6] we have already prove that $(xy^{\mu_1})^5 = s_{n-1}^{w+\mu_1 z}$ and $(xy^{\mu_2})^5 = s_{n-1}^{w+\mu_2 z}$. Since $V_{H_i\to\gamma_2(G)}$ and $V_{H_j\to\gamma_2(G)}$ are trivial, we obtain $s_{n-1}^{w+\mu_1 z} = s_{n-1}^{w+\mu_2 z} = 1$ then $w+\mu_1 z \equiv w+\mu_2 z \equiv 0\, (\mathrm{mod}\, 5)$ and since $5$ does not divide $\mu_1-\mu_2$ we get $z = 0$ and at the same time $w = 0$. To prove $a = 0$ we admit the same reasoning as proposition \ref{prop 3.2}.
\end{proof}

\subsection{APPLICATION}
Through this section we denote by:
\begin{itemize}
\item[-] $p$ a prime number such that $p \equiv -1 (\mathrm{mod}\, 25)$.
\item[-] $k_0 = \mathbb{Q}(\zeta_5)$ the $5^{th}$ cyclotomic field, $(\zeta_5 = e^{\frac{2\pi i}{5}}).$
\item[-] $k = k_0(\sqrt[5]{p})$ a cyclic Kummer extension of $k_0$ of degree $5$.
\item[-] $C_{k,5}$ the $5$-ideal class group of $k$.
\item[-] $k^* = (k/k_0)^*$ the relative genus field of $k/k_0$.
\item[-] $F_5^{(1)}$ the absolute Hilbert $5$-class field of a number field $F$.
\item[-] $G =\mathrm{Gal}\left((k^*)_5^{(1)}/k_0\right)$.
\end{itemize}
We begin by the following theorem.
\begin{theorem}\label{theo 3.1}
Let $k = \mathbb{Q}(\sqrt[5]{p}, \zeta_5)$ be the normal closure of a pure quintic field $\mathbb{Q}(\sqrt[5]{p})$, where $p$ a prime congruent to $-1$ modulo 25. Let $k_0$ be the the $5^{th}$ cyclotomic field. Assume that the $5$-class group $C_{k,5}$ of $k$, is of type $(5, 5)$, then $Gal(k^*/k_0)$ is of type $(5, 5)$, and two sub-extensions of $k^*/k_0$ admit a trivial $5$-class number.
\end{theorem}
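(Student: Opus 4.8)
The plan is to read off the degree and Galois structure of $k^*/k_0$ from genus theory, and then to locate the two small subfields by means of inertia groups together with the ambiguous class number formula. First I would fix the ramification data. Since $p\equiv -1\pmod{25}$, the Frobenius of $p$ in $\mathrm{Gal}(k_0/\mathbb{Q})\cong(\mathbb{Z}/5\mathbb{Z})^{\times}$ has order $2$, so $p$ splits in $k_0=\mathbb{Q}(\zeta_5)$ as $p\mathcal{O}_{k_0}=\mathfrak{p}_1\mathfrak{p}_2$ with residue degree $2$; and the classical criterion for pure quintic fields (unramifiedness of $5$ when $p\equiv\pm1\pmod{25}$) shows that $\mathfrak{l}=(1-\zeta_5)$ is unramified in $k=k_0(\sqrt[5]{p})$. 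Hence the only primes ramifying in $k/k_0$ are $\mathfrak{p}_1,\mathfrak{p}_2$, each totally ramified of index $5$.

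From the discussion preceding the theorem (that is, from \cite{FOU1} together with $\mathrm{rank}\,C_{k,5}^{(\sigma)}=1$) the relative genus field $k^*$ is a cyclic quintic extension of $k$, so $[k^{*}:k]=5$ and $[k^{*}:k_0]=25$. Since $k^*/k_0$ is abelian by definition of the genus field, $\mathrm{Gal}(k^{*}/k_0)$ is an abelian group of order $25$, hence either cyclic or of type $(5,5)$. To exclude the cyclic case I would argue with inertia. Because $k^*/k$ is unramified, ramification is exactly inherited from $k/k_0$, so the ramified primes of $k^{*}/k_0$ are precisely $\mathfrak{p}_1,\mathfrak{p}_2$, and for $\mathfrak{P}_i\mid\mathfrak{p}_i$ the inertia group $I_i:=I_{\mathfrak{P}_i}(k^{*}/k_0)$ meets $\mathrm{Gal}(k^{*}/k)$ trivially, hence injects into (indeed maps isomorphically onto) $\mathrm{Gal}(k/k_0)$ and is cyclic of order $5$. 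The fixed field of $\langle I_1,I_2\rangle$ is the maximal subextension of $k^{*}/k_0$ unramified over $k_0$; since $h_{k_0}=1$ and $k_0$ is totally complex, this fixed field is $k_0$ itself, so $\langle I_1,I_2\rangle=\mathrm{Gal}(k^{*}/k_0)$. A cyclic group of order $25$ cannot be generated by two subgroups of order $5$, so $\mathrm{Gal}(k^{*}/k_0)\cong(\mathbb{Z}/5\mathbb{Z})^{2}$ is of type $(5,5)$, and along the way one gets $I_1\neq I_2$. This settles the first assertion.

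For the second assertion I would single out the two fixed fields $L_1:=(k^{*})^{I_1}$ and $L_2:=(k^{*})^{I_2}$, each cyclic of degree $5$ over $k_0$. As $I_1\neq I_2$ are distinct subgroups of order $5$ in $(\mathbb{Z}/5\mathbb{Z})^{2}$, the image of $I_1$ in $\mathrm{Gal}(L_1/k_0)=\mathrm{Gal}(k^{*}/k_0)/I_1$ is trivial while that of $I_2$ is nontrivial; since no prime outside $\{\mathfrak{p}_1,\mathfrak{p}_2\}$ ramifies in $k^{*}/k_0$, the extension $L_1/k_0$ is ramified at $\mathfrak{p}_2$ only, and symmetrically $L_2/k_0$ at $\mathfrak{p}_1$ only. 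Applying the ambiguous class number formula to the cyclic extension $L_1/k_0$, the numerator contributes the single ramification index $5$ while the $5$-part of $h_{k_0}$ is $1$, and the denominator is $5$ times a unit index dividing $[L_1:k_0]=5$; integrality of $|C_{L_1,5}^{(\tau)}|$ then forces it to equal $1$. Finally, since a nontrivial finite $5$-group on which a group of order $5$ acts must have nontrivial fixed points, a trivial ambiguous $5$-class group forces $C_{L_1,5}=1$, and likewise $C_{L_2,5}=1$. Thus $L_1$ and $L_2$ are two sub-extensions of $k^{*}/k_0$ with trivial $5$-class number.

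The hard part will be the bookkeeping in the ambiguous class number formula: in principle one must control the unit index $(E_{k_0}:E_{k_0}\cap N_{L_i/k_0}L_i^{\times})$ and the contributions of the archimedean and $\mathfrak{l}$-places, and confirm that $\mathfrak{l}$ is genuinely unramified in $k/k_0$ (hence in $k^{*}/k_0$). The argument is helped considerably by the fact that once the numerator equals $5$ and $h_{k_0}=1$, integrality of the class number leaves no freedom, so the delicate unit computation is bypassed; the only genuinely arithmetic inputs that must be pinned down are the splitting type of $p$ and the unramifiedness of $5$, both of which follow from $p\equiv-1\pmod{25}$.
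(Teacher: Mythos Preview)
Your proof is correct and takes a genuinely different route from the paper's. The paper works explicitly with Kummer generators: it quotes \cite{Mani} to write $k^{*}=k_0(\sqrt[5]{\pi_1\pi_2},\sqrt[5]{\pi_1^{a_1}\pi_2^{a_2}})$, reads off the two distinguished subfields concretely as $L_i=k_0(\sqrt[5]{\pi_i})$, and then invokes the rank formula $t=d+q^{*}-3$ from \cite{Mani} (with $d=1$ and $q^{*}=2$) to obtain $t=0$; the implication ``trivial ambiguous classes $\Rightarrow$ trivial $5$-class group'' is phrased there as a genus-field containment argument. You instead characterise $L_1,L_2$ abstractly as the fixed fields of the two inertia subgroups $I_1,I_2\subset\mathrm{Gal}(k^{*}/k_0)$, which immediately forces each $L_i/k_0$ to ramify at a single prime, and then you apply Chevalley's ambiguous class number formula directly, using integrality to bypass the unit norm index computation; the same inertia set-up also gives you the type-$(5,5)$ structure of $\mathrm{Gal}(k^{*}/k_0)$ via $\langle I_1,I_2\rangle=\mathrm{Gal}(k^{*}/k_0)$ and $h_{k_0}=1$, whereas the paper deduces this from the explicit Kummer description. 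Your approach is more self-contained (no appeal to the specific value of $q^{*}$ from \cite{Mani}) and makes the ramification bookkeeping transparent; the paper's approach has the advantage of producing explicit radical generators for $L_1$ and $L_2$, which is useful for the numerical examples that follow.
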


\begin{proof}
By $C_{k,5}^{(\sigma)}$ we denote the subgroup of ambiguous ideal classes under the action of $Gal(k/k_0) = \langle \sigma \rangle$. According to [\cite{FOU1}, theorem 1.1], in this case of the prime $p$ we have rank $C_{k,5}^{(\sigma)} = 1$, and by class field theory, since $[k^* : k] = |C_{k,5}^{(\sigma)}|$, we have that $k^*/k$ is a cyclic quintic extension, whence $Gal(k^*/k_0)$ is of type $(5,5)$.\\
Since $p \equiv -1 (\mathrm{mod}\, 25)$, then $p$ splits in $k_0$ as $p = \pi_1\pi_2$, where $\pi_1,\, \pi_2$ are primes of $k_0$. By [\cite{Mani}, theorem 5.15] we have explicitly the relative genus field $k^*$ as $k^* = k(\sqrt[5]{\pi_1^{a_1}\pi_2^{a_2}}) = k_0(\sqrt[5]{\pi_1\pi_2}, \sqrt[5]{\pi_1^{a_1}\pi_2^{a_2}})$ with $a_1, a_2 \in \{1, 2, 3, 4\}$ such that $a_1 \neq a_2$. Its clear that the extension $k^*/k_0$ admits six sub-extensions, where $k$ is one of them, and the others are $k_0(\sqrt[5]{\pi_1^{a_1}\pi_2^{a_2}})$, $k_0(\sqrt[5]{\pi_1^{a_1+1}\pi_2^{a_2+1}})$, $k_0(\sqrt[5]{\pi_1^{a_1+2}\pi_2^{a_2+2}})$, $k_0(\sqrt[5]{\pi_1^{a_1+3}\pi_2^{a_2+3}})$ and $k_0(\sqrt[5]{\pi_1^{a_1+4}\pi_2^{a_2+4}})$. Since $a_1, a_2 \in \{1, 2, 3, 4\}$, we can see that the extensions $L_1 = k_0(\sqrt[5]{\pi_1})$ and $L_2 = k_0(\sqrt[5]{\pi_2})$ are sub-extensions of $k^*/k_0$.\\
In [\cite{Mani}, section 5.1], we have an investigation of the rank of ambiguous classes of $k_0(\sqrt[5]{x})/k_0$, denoted $t$. We have $t = d+q^*-3$, where $d$ is the number of prime divisors of $x$ in $k_0$, and $q^*$ an index defined as [\cite{Mani}, section 5.1]. For the extensions $L_i/k_0$, $(i = 1, 2)$, we have $d = 1$ and by [\cite{Mani}, theorem 5.15] we have $q^* = 2$, hence $t = 0$.\\
By $h_5(L_i)$, $(i = 1, 2)$, we denote the class number of $L_i$, then we have $h_5(L_1) = h_5(L_2) = 1$. Otherwise $h_5(L_i) \neq 1 $, then there exists an unramified cyclic extension of $L_i$, denoted $F$. This extension is abelian over $k_0$, because $[F : k_0] = 5^2$, then $F$ is contained in $(L_i/k_0)^*$ the relative genus field of $L_i/k_0$. Since $[(L_i/k_0)^* : L_i] = 5^t = 1$, we get that $(L_i/k_0)^* = L_i$, which contradicts the existence of $F$. Hence the $5$-class number of $L_i$, $(i = 1, 2)$, is trivial.
\end{proof}
In what follows, we denote by $L_1$ and $L_2$ the two sub-extensions of $k^*/k_0$, which verify theorem \ref{theo 3.1}, and by $\tilde{L}$ the three remaining sub-extensions different to $k$. Let $G = Gal((k^*)_5^{(1)}/k_0)$, we have $\gamma_2(G) = Gal((k^*)_5^{(1)}/k^*)$, then $G/\gamma_2(G) = Gal(k^*/k_0)$ is of type $(5, 5)$, therefore $G$ is metabelian $5$-group with factor commutator of type $(5, 5)$, thus $G$ admits exactly six maximal normal subgroups as follows:
\centerline{$H = Gal((k^*)_5^{(1)}/k)$, $H_{L_i} = Gal((k^*)_5^{(1)}/L_i)$, $(i = 1, 2)$, $\tilde{H} = Gal((k^*)_5^{(1)}/\tilde{L})$}.
With $\chi_2(G)$ is one of them.\\
Now we can state our principal result.

\begin{theorem}\label{theo 3.2}
Let $G = Gal((k^*)_5^{(1)}/k_0)$ be a $5$-group of order $5^n$, $n\geq 4$, then $G$ is a metabelian of maximal class. Furthermore we have:\\
- If $\chi_2(G) = H_{L_i} (i = 1, 2)$ then: $G \sim G_a^{(n)}(z,0)$ with $n \in \{4, 5, 6\}$ and $a, z\in \{0, 1\}$.\\
- If $\chi_2(G) = \tilde{H}$ then : $G \sim G_1^{(n)}(0,0)$ with $n = 5$ or $6$.\\
\centerline{\textcolor{white}{............} $G \sim G_0^{(n)}(0,0)$ with $n \geq 7$ such that $n = s+1$ where $h_5(\tilde{L}) = 5^s$.}
\end{theorem}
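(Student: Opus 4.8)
The plan is to combine the class-field-theoretic input of Theorem~\ref{theo 3.1} with the group-theoretic classification in Propositions~\ref{prop 3.1}--\ref{prop 3.3}, using Artin reciprocity to convert the vanishing of $5$-class numbers into the vanishing of transfers. First I would record the two facts that hold without extra hypotheses. Since $(k^*)_5^{(1)}/k^*$ is the Hilbert $5$-class field, reciprocity identifies $\gamma_2(G)=\mathrm{Gal}\left((k^*)_5^{(1)}/k^*\right)$ with $C_{k^*,5}$, which is abelian, so $G$ is metabelian; and the computation recorded just before the theorem gives $G/\gamma_2(G)=\mathrm{Gal}(k^*/k_0)$ of type $(5,5)$. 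To upgrade this to maximal class I would exhibit a maximal normal subgroup whose commutator factor has order exactly $5^2$ and invoke Lemma~\ref{lemma 2.1}. For a degree-$5$ subfield $F$ with $k_0\subset F\subset k^*$, writing $\sigma_F$ for a generator of $\mathrm{Gal}(k^*/F)$, one has $\gamma_2(H_F)=(\sigma_F-1)C_{k^*,5}$, hence $|H_F/\gamma_2(H_F)|=5\cdot|(C_{k^*,5})_{\sigma_F}|$; as invariants and coinvariants of an endomorphism of a finite module have equal order, this equals $5\cdot|C_{k^*,5}^{\sigma_F}|$. Taking $F=L_i$ and feeding $h_5(L_i)=1$ from Theorem~\ref{theo 3.1} into the ambiguous-class-number formula for the cyclic extension $k^*/L_i$ should yield $|C_{k^*,5}^{\sigma_{L_i}}|=5$, so $|H_{L_i}/\gamma_2(H_{L_i})|=5^2$ and $G$ is of maximal class.

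Next I would set up the dictionary that drives everything: the transfer $V_{H_F\to\gamma_2(G)}\colon H_F/\gamma_2(H_F)\to\gamma_2(G)$ corresponds under reciprocity to the extension-of-classes map $C_{F,5}\to C_{k^*,5}$, so it is trivial whenever $C_{F,5}$ is trivial. Applied to $F=L_1$ and $F=L_2$, Theorem~\ref{theo 3.1} then forces $V_{H_{L_1}\to\gamma_2(G)}$ and $V_{H_{L_2}\to\gamma_2(G)}$ both to vanish. This single arithmetic fact feeds the entire case analysis.

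I would then split according to the position of $\chi_2(G)$ among the six maximal normal subgroups. If $\chi_2(G)=H_{L_i}$, then one vanishing transfer is $V_{\chi_2(G)\to\gamma_2(G)}$, and choosing the generator $x$ so that $H_2=H_{L_j}$ with $j\ne i$ makes the other $V_{H_2\to\gamma_2(G)}$; Proposition~\ref{prop 3.1} then applies verbatim and gives $n\le 6$ with $G\sim G_a^{(n)}(z,0)$, $n\in\{4,5,6\}$, $a,z\in\{0,1\}$, which is exactly the first assertion. If $\chi_2(G)=\tilde{H}$, then $H_{L_1}$ and $H_{L_2}$ are two of $H_2,\dots,H_6$, so Propositions~\ref{prop 3.2}--\ref{prop 3.3} apply according to the position of $H_2$. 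For $n\ge 7$ both routes force $\chi_2(G)$ to be abelian, hence $a=0$ and $G\sim G_0^{(n)}(0,0)$; the normalisation $n=s+1$ follows because $\chi_2(G)=H_1$ abelian makes $(k^*)_5^{(1)}/\tilde{L}$ abelian and, for the field $\tilde{L}$ fixed by the two-step centralizer, unramified, whence $(k^*)_5^{(1)}=\tilde{L}_5^{(1)}$ and $5^{n-1}=|H_1|=h_5(\tilde{L})=5^s$. For $n=5,6$ the propositions only yield $G\sim G_a^{(n)}(0,0)$, and I would pin down the defect $k=k(G)$ arithmetically: Theorem~\ref{theo 2.2} gives $|H_1/\gamma_2(H_1)|=5^{n-k-1}=h_5(\tilde{L})$, and the value of the class number forces $k=1$, hence $a=1$ and $G\sim G_1^{(n)}(0,0)$.

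The hard part will be the translation step and its ramification bookkeeping: both the triviality of $V_{H_{L_i}\to\gamma_2(G)}$ and the equality $|H_{L_i}/\gamma_2(H_{L_i})|=5^2$ rest on controlling the (co)invariants of $C_{k^*,5}$ under $\sigma_{L_i}$ through the ambiguous-class-number formula for the ramified extensions $k^*/L_i$, where $H_{L_i}/\gamma_2(H_{L_i})$ is a priori larger than $C_{L_i,5}$. A second delicate point is the range $n=5,6$ with $\chi_2(G)=\tilde{H}$, where the purely group-theoretic propositions leave $a$ undetermined and one must import $h_5(\tilde{L})$ to force $a=1$. Finally, for completeness I would check that $\chi_2(G)=\mathrm{Gal}\left((k^*)_5^{(1)}/k\right)$ does not occur, so that the two displayed families of cases are exhaustive.
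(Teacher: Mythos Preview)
Your overall architecture---translate Theorem~\ref{theo 3.1} into vanishing transfers and feed Propositions~\ref{prop 3.1}--\ref{prop 3.3}---matches the paper, but two points diverge from the paper's argument and one of them is a genuine gap.

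First, your route to maximal class is more elaborate than necessary. You propose to compute $|H_{L_i}/\gamma_2(H_{L_i})|$ via the ambiguous-class-number formula for $k^*/L_i$. The paper bypasses this entirely: since $C_{k,5}$ is of type $(5,5)$ \emph{by hypothesis}, the subgroup $H=\mathrm{Gal}\bigl((k^*)_5^{(1)}/k\bigr)$ already satisfies $H/\gamma_2(H)\cong C_{k,5}$ (here $k^*/k$, being the genus field, is unramified, so the maximal abelian subextension of $(k^*)_5^{(1)}/k$ is exactly $k_5^{(1)}$), hence $|H/\gamma_2(H)|=5^2$ and Lemma~\ref{lemma 2.1} applies immediately. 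Your approach is not wrong, but it introduces a ramification computation you do not need.

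Second, and this is the real gap: in the case $\chi_2(G)=\tilde{H}$ you go straight to $n=5,6$ and $n\ge 7$, but you never exclude $n=4$. Propositions~\ref{prop 3.2} and~\ref{prop 3.3} do not by themselves rule out $n=4$; Proposition~\ref{prop 3.3} would even give $G\sim G_0^{(4)}(0,0)$. The paper handles this with an arithmetic argument you are missing: for $n=4$ one has $\gamma_4(G)=1$, so $\tilde{H}=\chi_2(G)$ is abelian and $h_5(\tilde{L})=|\tilde{H}|=5^3$; but Parry's class-number relation \cite{Parry} for pure quintic fields, applied to both $k$ and $\tilde{L}$ (which share the same conductor), forces the $5$-valuation of $h_5(\tilde{L})$ to be even, contradicting $5^3$. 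Without this parity input the case split in the second bullet of the theorem is not justified.

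Two smaller remarks. Your determination of $a=1$ for $n\in\{5,6\}$ via $|H_1/\gamma_2(H_1)|=5^{n-k-1}=h_5(\tilde{L})$ presupposes that you know $h_5(\tilde{L})$ (or at least its parity, again via Parry), and also that $k^*/\tilde{L}$ is unramified so that $\tilde{H}/\gamma_2(\tilde{H})$ really computes $C_{\tilde{L},5}$; you should make both points explicit. The paper, for its part, simply invokes Propositions~\ref{prop 3.2}--\ref{prop 3.3} here without isolating $a$, so your attempt is in fact more detailed than the published proof on this point. Finally, your closing check that $\chi_2(G)\neq H$ is done in the paper at the outset, using Theorem~\ref{theo 2.2}: if $\chi_2(G)=H_1=H$ then $|H/\gamma_2(H)|=5^{n-k-1}\ge 5^3$, contradicting $|C_{k,5}|=5^2$.
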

\begin{proof}
Let $G = Gal((k^*)_5^{(1)}/k_0)$ and $H = Gal((k^*)_5^{(1)}/k)$ its maximal normal subgroup, then $\gamma_2(H) = Gal((k^*)_5^{(1)}/k_5^{(1)})$, therefor $H/\gamma_2(H) = Gal(k_5^{(1)}/k) \simeq C_{k,5}$, and as $C_{k,5}$ is of type $(5, 5)$ by hypothesis we get that $|H/\gamma_2(H)| = 5^2$. Lemma \ref{lemma 2.1} imply  that $G$ is a metabelian $5$-group of maximal class, generated by two elements $G = \langle x, y \rangle$, such that, $x \in G \setminus \chi_2(G)$ and $y \in \chi_2(G) \setminus \gamma_2(G)$. Since $\chi_2(G) = \langle y , \gamma_2(G) \rangle$, we have $\chi_2(G) \neq H$. Otherwise we get that $|H/\gamma_2(H)| = 5^2$ which contradict theorem  \ref{theo 2.1}.\\
According to theorem \ref{theo 3.1}, we have $h_5(L_1) = h_5(L_2) = 1$ then the transfers $V_{H_{L_i}\to \gamma_2(G)}$ are trivial.\\
If $\chi_2(G) = H_{L_i}$ the results are nothing else than  proposition \ref{prop 3.1}.\\
If $\chi_2(G) = \tilde{H}$ and $n = 4$ then $\gamma_4(G) = 1$ and $[\chi_2(G), \gamma_2(G)] = \gamma_2(\tilde{H})$, also $[\chi_2(G), \gamma_2(G)] = \gamma_4(G) = 1$ then $\chi_2(\tilde{H}) = 1$, whence $\tilde{H}$ is abelian. Consequently $\tilde{H}/\gamma_2(\tilde{H}) = C_{\tilde{L}, 5}$, so $h_5(\tilde{L}) = |\tilde{H}| = 5^3$ because its a maximal subgroup of $G$. Since $\tilde{L}$ and $k$ have always the same conductor, we deduce that $h_5(k)$ and $h_5(\tilde{L})$ verify the relations 
$5^5h_{\tilde{L}} = uh_\Gamma^4$ and $5^5h_{k} = uh_\Gamma^4$, given by C. Parry in \cite{Parry}, where $u$ is a unit index and a divisor of $5^6$. Using the $5$-valuation on these relations we get that $h_5(\tilde{L}) = 5^s$ where $s$ is even,  which contradict the fact that $h_5(\tilde{L}) = 5^3$, hence $n \geq 5$.\\
The results of the theorem are exactly application of propositions \ref{prop 3.2}, \ref{prop 3.3}. According to proposition \ref{prop 3.2}, if $n \geq 7$ we have $|\chi_2(G)| = 5^{n-1}$ and since $h_5(\tilde{L}) = |\tilde{H}/\gamma_2(\tilde{H})| = |\tilde{H}| = 5^{n-1} = 5^s$ we deduce that $n = s+1$.
\end{proof}

\section{Numerical examples}
For these numerical examples of the prime $p$, we have that $C_{k,5}$ is of type $(5, 5)$ and rank $C_{k,5}^{(\sigma)}\,=\,1$, which mean that $k^*$ is cyclic quintic extension of $k$, then by theorem \ref{theo 3.2} we have a completely determination of $G$. We note that the absolute degree of $(k^*)_5^{(1)}$ surpass $100$, then the task to determine the order of $G$ is definitely far beyond the reach of computational algebra systems like MAGMA and PARI/GP.
\begin{center}
Table 1: $k\,=\,\mathbb{Q}(\sqrt[5]{p}, \zeta_5)$ with $C_{k,5}$ is of type $(5,5)$ and rank $C_{k,5}^{(\sigma)}\,=\,1$.\\

\begin{tabular}{|c|c|c|c|c|}
\hline 
$p$ &  $p\,(\mathrm{mod}\,25)$ & $h_{k,5}$ & $C_{k,5}$ & rank $(C_{k,5}^{(\sigma)})$ \\ 
\hline 
149 &  -1 & 25 & $(5,5)$ & 1 \\ 
199 &  -1 & 25 & $(5,5)$ & 1 \\ 
349 &  -1 & 25 & $(5,5)$ & 1 \\ 
449 &  -1 & 25 & $(5,5)$ & 1 \\ 
559 &   -1 & 25 & $(5,5)$ & 1 \\ 
1249 &   -1 & 25 & $(5,5)$ & 1 \\ 
1499 &   -1 & 25 & $(5,5)$ & 1 \\ 
1949 & -1 & 25 & $(5,5)$ & 1 \\ 
1999 &   -1 & 25 & $(5,5)$ & 1 \\ 
2099 &   -1 & 25 & $(5,5)$ & 1 \\ 
\hline 
\end{tabular} 
\end{center}

\newpage



\begin{thebibliography}{XX}
\bibitem{black}
N.Blackburn,  \emph{On a special class of p-groups}, Acta Math. 100 (1958), 45-92.\label{black}

\bibitem{H.D 1}
H.Dietrich, B.Eick, and D.Feichtenschlager. \emph{Investigating p-groups by coclass with GAP}. Contemp. Math. AMS 470, 45-61, 2008.\label{H.D 1}

\bibitem{H.D 2}
H.Dietrich. \emph{Periodic patterns in the graph of p-groups of maximal class.} J. Group Theory 13, 851-871, 2010.\label{H.D 2}

\bibitem{H.D 3}
H.Dietrich. \emph{A new periodic pattern in the graph of p-groups of maximal class.} Bull. London Math. Soc. 42, 1073-1088, 2010. \label{H.D 3}

\bibitem{Erick}
B.Eick and C.Leedham-Green,  \emph{Classification of prime-power groups by coclass}, Bull. London Math. Soc. 40, 274-288, 2008.\label{Erick}

\bibitem{FOU1}\label{FOU1}
F.Elmouhib, M.Talbi, and A.Azizi, \emph{5-rank of ambiguous class groups of quintic Kummer extensions}, Accepted for publication in Proceedings-Mathematical Sciences.

\bibitem{Mani}
M.Kulkarni, D.Majumdar, B.Sury,  \emph{$l$-class groups of cyclic extension of prime degree $l$}, J. Ramanujan Math. Soc. 30, No.4 (2015), 413-454.\label{Mani}

\bibitem{GRE 2}
C.R.Leedham-Green and S.McKay. \emph{On the classification of p-groups of maximal class}. Quart. J. Math. Oxford Ser. (2) 35, 293-304, 1984.\label{GRE 2}

\bibitem{Mayer trans}\label{Mayer trans}
D.C.Mayer, \emph{Transfers of metabelian p-groups}, Monatsh. Math. 166 (2012), no. 3-4, 467-495.\label{Mayer trans}

\bibitem{Mayer secon}\label{Mayer secon}
D.C.Mayer, \emph{The second $p$-class group of a number field}, Int. J. Number Theory 8 (2012), no. 2, 471-505.\label{Mayer secon}

\bibitem{Miech}
R.J.Miech, \emph{Metabelian $p$-groups of maximal class}, Trans. Amer. Math. Soc. 152 (1970), 331-373.\label{Miech}

\bibitem{nebelu}
B.Nebelung, \emph{Klassiffication metabesher 3-gruppen mit Faktorkommutatogruppe von typ $(3, 3$) und anwendung auf das Kapitulationsproblem}, Thèse de doctorat (1989), Kolon.\label{nebelu}

\bibitem{Newlam}
M.F.Newman. \emph{Groups of prime-power order}. Groups Canberra 1989, 49-62, Lecture Notes in Math. 1456, Springer, Berlin, 1990.\label{Newlam}

\bibitem{Parry}
C. Parry,  \emph{Class number relations in pure quintic fields}, Symposia Mathematica. 15 (1975), 475-485.\label{Parry} 
\bibitem{PARI} 
The PARI Group, PARI/GP, Version 2.4.9, Bordeaux, 2017, \url{http://pari.math.u-bordeaux.fr}. \label{PARI}

\end{thebibliography}
\end{document}